\documentclass[11pt]{article}

\usepackage{amsmath, amssymb, amsthm, amsfonts}
\usepackage[T1]{fontenc}
\usepackage{mathpazo}         
\usepackage{microtype}        
\usepackage[outline]{contour} 

\usepackage{graphicx}
\usepackage[dvipsnames]{xcolor}
\usepackage{enumitem}
\usepackage{lastpage}
\usepackage{fancyhdr}
\usepackage[unicode, hidelinks]{hyperref}
\usepackage[framemethod=TikZ]{mdframed}
\usetikzlibrary{decorations.pathreplacing}
\usepackage{fontawesome5}
\usepackage{float}

\newcommand{\makeboxedtitle}[4]{
    \begin{center}
        \begin{mdframed}[
            linecolor=black,
            linewidth=1pt,
            roundcorner=0pt,
            innertopmargin=15pt,
            innerbottommargin=15pt,
            backgroundcolor=gray!5
        ]
            \centering
            {\LARGE \bfseries #1 \par} 
            \vspace{12pt}
            {\large 
              \begin{tabular}{c} 
                #2 
              \end{tabular}\par} 
            \ifx&#3&\else \vspace{8pt} {\small \texttt{#3} \par} \fi
            \vspace{5pt}
            {\small \today}
        \end{mdframed}
        \vspace{10pt}
        \begin{abstract}
            #4
        \end{abstract}
    \end{center}
    \vspace{15pt}
}

\fancypagestyle{firstpage}{
    \fancyhf{}
    
    \fancyfoot[L]{\tiny Typeset with \href{https://github.com/tonamatos/tonas-latex}{tonas-papers.sty}}
    \fancyfoot[C]{\footnotesize \copyright\ \the\year\ Tonatiuh Matos-Wiederhold}
}

\AtBeginDocument{\thispagestyle{firstpage}}

\theoremstyle{plain}
\newtheorem{theorem}{Theorem}[section]
\newtheorem{lemma}[theorem]{Lemma}

\newtheorem{corollary}[theorem]{Corollary}
\newtheorem{fact}[theorem]{Fact}

\newtheorem{claim}{Claim}[theorem] 

\theoremstyle{definition}
\newtheorem{definition}[theorem]{Definition}

\theoremstyle{remark}

\newenvironment{claimproof}[1][Proof of Claim]{
    \begin{proof}[#1]
}{
    \end{proof}
}

\DeclareMathOperator{\Hom}{Hom}

\begin{document}

\makeboxedtitle{A Concise Proof of the $L_0$ Dichotomy}{
    Tonatiuh Matos-Wiederhold
}{}{
    Carroy, Miller, Schrittesser, and Vidny\'anszky established the $L_0$ dichotomy: there is a Borel graph of Borel chromatic number three that admits a continuous homomorphism to every analytic graph of Borel chromatic number at least three. Their proof relies on a transfinite analysis of terminal approximations over a decreasing $\omega_1$-sequence of analytic sets.

    I give a new, substantially shorter proof of this result by adapting the graph-theoretic framework recently introduced by Bernshteyn for the $G_0$ dichotomy. The central device is a $\sigma$-ideal of \emph{small} sets of homomorphisms from finite path approximations into the target graph, where smallness is witnessed by a bounded odd-walk condition on vertex projections. The key lemma that largeness is preserved under the doubling operation is established via the First Reflection Theorem, replacing the original transfinite construction with a single Borel reflection argument. The continuous homomorphism from the canonical graph $\mathbb L_c$ into the target is then obtained as a limit of shrinking families of copies, in direct analogy with Bernshteyn's proof for $G_0$.
}

\section{Introduction}\label{sec:concise-intro}

Throughout this entire work, $G$ denotes an analytic graph on a Polish space $V(G)=X$.
That is, $X$ is a separable completely metrizable topological space, and $G$ is a symmetric and irreflexive relation on $X$ which, as a subspace of $X\times X$, is the continuous image of a Polish space.

I denote the \emph{Borel chromatic number} of $G$ by $\chi_B(G)$, that is, $\chi_B(G)$ is the least number of colours $k$ needed to find a Borel mapping $c\colon X\to k$ in such a way that adjacent vertices in $G$ are given different colours.
I employ the symbol $H\to_cG$ (respectively, $H\to_BG$) to abbreviate the fact that there is a continuous (respectively Borel) homomorphism from the analytic graph $H$ to the analytic graph $G$.
It is straightforward to show the following.

\begin{fact}\label{fact:chrom_transitive}
    If $H\to_cG$ or, more generally if $H\to_BG$, then $\chi_B(H)\leq\chi_B(G)$.
\end{fact}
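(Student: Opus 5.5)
The plan is to pull back a colouring along the homomorphism. Since every continuous map is Borel, the hypothesis $H\to_cG$ is a special case of $H\to_BG$. So it suffices to treat a Borel homomorphism $f\colon V(H)\to V(G)$. If $\chi_B(G)$ is infinite (or undefined in the finite sense) there is nothing to show, so I will assume $\chi_B(G)=k<\infty$. I then fix a Borel map $c\colon X\to k$ witnessing this, so that $x\mathrel{G}y$ implies $c(x)\neq c(y)$.

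Next I define $c'=c\circ f\colon V(H)\to k$. This map is Borel, because a composition of Borel maps is Borel: for any $i<k$, the set $c'^{-1}(\{i\})=f^{-1}(c^{-1}(\{i\}))$ is the preimage under a Borel map of a Borel set.

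It remains to check that $c'$ is a proper colouring. Suppose $u\mathrel{H}v$. Because $f$ is a homomorphism, $f(u)\mathrel{G}f(v)$. Hence $c(f(u))\neq c(f(v))$, that is, $c'(u)\neq c'(v)$. Therefore $c'$ is a Borel $k$-colouring of $H$, and $\chi_B(H)\le k=\chi_B(G)$.

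If one wants the infinite case spelled out, the same argument works verbatim with $k=\omega$: a Borel map into $\omega$ with countably many colour classes. No step is a genuine obstacle. The only point requiring care is the measurability of the composition, and that is immediate from preimages of Borel sets under Borel maps being Borel.
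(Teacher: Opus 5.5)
Your proof is correct: pulling a Borel $k$-colouring of $G$ back along the Borel homomorphism via $c\circ f$ is exactly the straightforward argument the paper has in mind. The paper states this fact without proof, and your handling of the continuous case (as a special case of the Borel one) and of the infinite case is fine.
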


Using a standard greedy algorithm argument, it is clear that any finite graph $G$ of maximum degree $\Delta$ satisfies $\chi(G)\leq\Delta+1$.
Interestingly, a similar fact holds for Borel graphs of uniformly bounded degree.

\begin{theorem}[Proposition~4.6 in \cite{kechris1999borel}]\label{prop:brooks}
    If $G$ is a Borel graph on a Polish space all of whose degrees are bounded by the natural number $k$, then $\chi_B(G)\leq k+1$.
\end{theorem}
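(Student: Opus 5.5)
The plan is the Kechris--Solecki--Todorcevic argument: first find a Borel independent set that is maximal, then colour greedily by repeating this $k+1$ times.

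\textbf{Step 1: a countable Borel colouring.} Fix a countable basis $(U_n)_{n\in\mathbb N}$ for the Polish topology on $X$. Since every vertex has at most $k$ neighbours, for each $x$ there is an $n$ such that $x\in U_n$ and no $G$-neighbour of $x$ lies in $U_n$. Let $c(x)$ be the least such $n$. Then $c$ is a proper colouring with countably many colours, because adjacent $x,y$ with $c(x)=c(y)=n$ would give $y\in U_n$. To see that $c$ is Borel, note that
\[
\{x : c(x)\neq n\}\cap U_n=\{x\in U_n : \exists y\,(xGy \wedge y\in U_n)\}
\]
is the projection of a Borel set whose sections are finite (of size at most $k$). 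By the Lusin--Novikov uniformization theorem, such projections are Borel. So each $A_n=\{x: c(x)=n\}$ is Borel and $G$-independent.

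\textbf{Step 2: a maximal Borel independent set.} Build $I_0=A_0$ and $I_{n+1}=I_n\cup\{x\in A_{n+1}: x \text{ has no neighbour in } I_n\}$. Since $A_{n+1}$ is independent, each $I_n$ is independent, and $I=\bigcup_n I_n$ is independent. It is also maximal: a vertex $x\in A_n$ outside $I$ must have a neighbour in $I_{n-1}$. Each stage is Borel, since the set of points with a neighbour in a Borel set $B$ is the projection of $G\cap(X\times B)$. That relation has sections of size at most $k$, so Lusin--Novikov applies again. This uses that $G$ is Borel, which the hypothesis gives.

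\textbf{Step 3: greedy colouring.} Let $B_0$ be a maximal Borel independent set of $G$. Inductively, let $B_{i}$ be a maximal Borel independent set of $G$ restricted to $X\setminus(B_0\cup\dots\cup B_{i-1})$, which is a Borel graph on a standard Borel space. This is harmless: we can refine to a Polish topology on that Borel subset, or simply rerun Steps 1--2 relative to the subset. Any point $x$ not in $B_0\cup\dots\cup B_k$ was excluded from each $B_i$ by maximality, so it has a neighbour in each of the $k+1$ pairwise disjoint sets $B_i$. That gives $x$ at least $k+1$ neighbours, which is a contradiction. Hence $x\mapsto$ the unique $i$ with $x\in B_i$ is a Borel proper $(k+1)$-colouring, and $\chi_B(G)\le k+1$.

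The main obstacle is the Borelness checks in Steps 1 and 2: projections of Borel sets are in general only analytic. The key is the bounded (in particular countable) sections together with Lusin--Novikov.
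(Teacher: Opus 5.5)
Your argument is correct. It is essentially the standard Kechris--Solecki--Todorcevic proof: a countable Borel colouring via a countable basis and Lusin--Novikov, then a greedy maximal Borel independent set, then $k+1$ rounds of removal. The paper itself gives no proof of this statement and only cites that source.

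There is one slip to fix. The displayed identity in Step 1 is false, because $c(x)$ is the \emph{least} admissible index: a point $x\in U_n$ with no neighbour in $U_n$ may still have $c(x)<n$. Instead, set $S_n:=U_n\setminus\mathrm{proj}_X\bigl(G\cap(X\times U_n)\bigr)$. This set is Borel by Lusin--Novikov, and then $A_n=S_n\setminus\bigcup_{m<n}S_m$ is Borel.
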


The following notion, taken from \cite[Theorem~35.10, p.~285]{Kechris_1995_DescriptiveSetTheory}, is a well-known fact about analytic sets.

\begin{definition}\label{def:pi11-on-sigma11}
    A collection of subsets $\Phi$ of a Polish space $X$ is said to be \emph{$\Pi^1_1$ on $\Sigma^1_1$} if for any Polish $Y$ and any $\Sigma^1_1$ set $A\subseteq Y\times X$, the set $A_\Phi:=\{y\in Y:A_y:=\{x\in X:(y,x)\in A\}\in\Phi\}$ is $\Pi^1_1$.
\end{definition}

\begin{lemma}[First Reflection Theorem]\label{lemma:first-reflection}
    If $\Phi$ is as in the preceding definition, then for any $\Sigma^1_1$ set $A\in\Phi$, there is a Borel set $B\subseteq X$ such that $A\subseteq B$ and $B\in\Phi$.
\end{lemma}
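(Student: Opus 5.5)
The plan is the classical proof via the Lusin separation theorem and a diagonal argument using a universal set.

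Fix a $\Sigma^1_1$ set $A\in\Phi$. The aim is to build an increasing sequence $A=A_0\subseteq B_0\subseteq A_1\subseteq B_1\subseteq\cdots$ with each $A_n$ analytic, each $B_n$ Borel, and each $A_n\in\Phi$. Then $B=\bigcup_n B_n=\bigcup_n A_n$ is Borel, but we still need $B\in\Phi$. So the construction must be arranged so that $A_{n+1}$ is chosen as the union of all ``co-analytic-side'' information forced by $B_n$. The clean way is the following.

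First I will pass to a stronger statement. Let $\mathcal C\subseteq 2^\omega\times X$ be universal for $\Sigma^1_1$ subsets of $X$ and $\mathcal D\subseteq 2^\omega\times X$ universal for $\Pi^1_1$ subsets, with the standard parametrization. For $y\in 2^\omega$, $(\mathcal C_y,\mathcal D_y)$ is a pair with $\mathcal C_y$ analytic and $\mathcal D_y$ co-analytic.

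\textbf{Key step (one reflection step).} If $A$ is $\Sigma^1_1$ and $A\in\Phi$, produce an analytic $A'\supseteq A$ with $A'\in\Phi$ and a co-analytic $C$ with $A\subseteq C\subseteq A'$. To do this, consider the set
\[
P=\{y: \mathcal D_y\supseteq A \text{ and } \textstyle\bigcup\{\mathcal C_z\}\ldots\}
\]
This is where the $\Pi^1_1$-on-$\Sigma^1_1$ hypothesis is used. Concretely, I will use the analytic set $S=\{(y,x): x\in A\text{ or } x\in X\setminus \mathcal D_y\}$, which is $\Sigma^1_1$ in $2^\omega\times X$. By hypothesis $\{y: S_y\in\Phi\}$ is $\Pi^1_1$. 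Meanwhile, $\{y: X\setminus\mathcal D_y\subseteq A\}$-type conditions are $\Pi^1_1$ too. Intersecting, I obtain a $\Pi^1_1$ set of codes $y$ such that $\mathcal D_y\supseteq$ complement data and $S_y\in\Phi$. Then, using a $\Pi^1_1$-uniformization / boundedness argument is overkill. Instead I will use the simpler separation trick. If the set of good codes were nonempty, we pick $y$ and set $A'=S_y$, $C$ equal to its co-analytic part. Nonemptiness holds because $y$ coding $\mathcal D_y= X\setminus\emptyset$ adjusted appropriately gives $S_y=A$.

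\textbf{Iteration.} Iterating gives $A=A_0\subseteq C_0\subseteq A_1\subseteq C_1\subseteq\cdots$ with $A_n\in\Phi$ analytic and $C_n$ co-analytic. Set $B=\bigcup A_n=\bigcup C_n$. This is both $\Sigma^1_1$ and $\Pi^1_1$, hence Borel by Suslin's theorem. To get $B\in\Phi$, I will build all steps simultaneously via a single code, using the recursion theorem, or more elementarily the uniform version of the step. Then $B=S_y$ for a single $y$ satisfying the $\Pi^1_1$ condition, so $B\in\Phi$.

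\textbf{Main obstacle.} The delicate point is making the step uniform in codes, so that the limit is itself a single section of an analytic set certified to lie in $\Phi$. Otherwise $\Phi$ need not be closed under increasing unions. I expect this to require the standard Kleene fixed-point / uniformity argument for the universal sets. If that becomes heavy, I would simply cite the textbook proof in \cite[Theorem~35.10]{Kechris_1995_DescriptiveSetTheory}, since the paper treats the statement as well known.
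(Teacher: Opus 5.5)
The paper does not prove this lemma. It quotes it from \cite[Theorem~35.10]{Kechris_1995_DescriptiveSetTheory}, so your fallback of citing the textbook matches the paper. The argument you actually sketch, however, has genuine gaps.

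\textbf{The key step is never established.} $P$ is left unfinished. The set you do write down, $S_y=A\cup(X\setminus\mathcal D_y)$, yields no co-analytic set: $X\setminus\mathcal D_y$ is analytic, so ``its co-analytic part'' has no meaning. Your nonemptiness witness $\mathcal D_y=X$ gives $S_y=A$, which forces $C=A$. That set is co-analytic only if $A$ is already Borel, so the step is vacuous exactly in the nontrivial case. The claim that $\{y: X\setminus\mathcal D_y\subseteq A\}$ is $\Pi^1_1$ is also unjustified. It reads $\forall x\,(x\in\mathcal D_y\vee x\in A)$, a universal quantifier over a matrix with a $\Sigma^1_1$ disjunct, which in general gives only $\Pi^1_2$.

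\textbf{The final membership is never shown.} Even granting the step, you do not prove $B=\bigcup_n A_n\in\Phi$. You rightly note that $\Phi$ need not be closed under increasing unions, but ``a single code via the recursion theorem'' stands in for the entire content of the theorem. Separation alone cannot carry the argument. For analytic $A'$ disjoint from $A$, the family $\Phi=\{S: S\cap A'=\emptyset\}$ is $\Pi^1_1$ on $\Sigma^1_1$, and reflection applied to it \emph{yields} Lusin separation. So the hypothesis on $\Phi$ must be used in a genuinely diagonal way.

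\textbf{The missing idea is a $\Pi^1_1$-rank on the complement of $A$.}
\begin{enumerate}
    \item Let $C=X\setminus A$ and fix a $\Pi^1_1$-rank $\varphi$ on $C$. Then both $x\le^*_\varphi y\iff x\in C\wedge(y\notin C\vee\varphi(x)\le\varphi(y))$ and the strict version $<^*_\varphi$ are $\Pi^1_1$.
    \item The set $R=\{(y,x):\neg(x\le^*_\varphi y)\}$ is $\Sigma^1_1$.
    \item For $y\in A$, the section is $R_y=A$.
    \item For $y\in C$, the section is $R_y=X\setminus\{x\in C:\varphi(x)\le\varphi(y)\}$, a Borel superset of $A$. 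Indeed, $\{x\in C:\varphi(x)\le\varphi(y)\}$ is $\Pi^1_1$ via $\le^*_\varphi$, and $\Sigma^1_1$ as the complement of $\{x: y<^*_\varphi x\}$.
    \item By hypothesis, $P=\{y: R_y\in\Phi\}$ is $\Pi^1_1$, and $A\subseteq P$.
    \item If some $y\in C$ lies in $P$, then $B=R_y$ works. Otherwise $P=A$, so $A$ is $\Pi^1_1$ and hence Borel by Suslin's theorem, and $B=A$ works.
\end{enumerate}
This single application of the hypothesis, to a family indexed by points, replaces both your iteration and the appeal to the recursion theorem.
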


\section{Finite path approximations}\label{sec:finite-path-approx}

Fix the parameter $c\in\omega^\omega$ and a path of length $\omega$ on the vertex set $\{p_n\}_{n<\omega}$.
That is, the edge set $\{(p_n,p_{n+1}):n<\omega\}$.
Naturally, one could assume that $p_n=n$, but the labeling helps distinguish these vertices from other indices.

Recall that if $s$ and $t$ are finite strings (of, say, natural numbers), then $s^\frown t$ is the \emph{concatenation} of $s$ and $t$.
Similarly, $t^n$ is the repeated concatenation of $t$ with itself, and $(i)$ denotes the string of single value $i$.
I recursively construct a family of graphs $\{L^c_n:n<\omega\}$ such that for all $n<\omega$, the following properties hold.

\begin{enumerate}[label=(\roman*)]
    \item $L^c_n$ is a finite path.
    \item If $n>0$, the endpoints of $L^c_n$ are $e_i^n:=(p_0)^\smallfrown(0)^{n-1}{}^\smallfrown(i)$, for $i<2$.
    \item Every vertex of $L^c_n$ is of the form $(p_k)^\smallfrown t$ for some $k$ and $t\in2^{\leq n}$. Unless $t=\emptyset$, I call the vertex a \emph{non-path} vertex.
\end{enumerate}

Start with $L^c_0$ as the graph consisting of a single vertex $p_0$ and no edges, and set $e^0_0=e^0_1=p_0$.
Now suppose that $L^c_n$ has been constructed.
$L^c_{n+1}$ will be a path obtained from joining two copies of $L^c_n$ by a path of $c(n)+2$ edges (with $c(n)+1$ \emph{new} path vertices), but I must update the vertex labels in order to distinguish the two copies.
I do this by simply appending $0$ to each vertex of the first copy, and $1$ to the other.
Formally, I start by adding the vertex $v^\frown(i)$ for every $i<2$ and $v\in L^c_n$.
Adding the path $$((e_1^n)^\smallfrown(0),p_0,p_1,\dots,p_{c(n)},(e_1^n)^\smallfrown(1))$$ completes the construction of $L^c_{n+1}$, and its endpoints are $$e_i^{n+1}=(e_0^n)^\smallfrown(i)=(p_0)^\smallfrown(0)^n{}^\smallfrown(i),$$ for $i<2$.
This finishes the recursion.
Note that to construct $L^c_{n+1}$ from $L^c_n$, I only require knowledge of the value $c(n)$.

\begin{figure}[ht]
    \centering
    \begin{tikzpicture}[
    scale=0.78, transform shape,
    copycolor/.style={fill=blue!18, draw=black!70},
    joinone/.style={fill=green!28, draw=black!70},
    jointwo/.style={fill=orange!35, draw=black!70},
    jointhree/.style={fill=red!30, draw=black!70},
    nd/.style={circle, minimum size=5.5pt, inner sep=0pt, font=\scriptsize},
    every edge/.style={draw=black!50, thick},
]

    \draw[thick, black!40] (0.00,0.00) -- (1.60,0.00);
    \draw[thick, black!40] (1.60,0.00) -- (3.20,0.00);
    \draw[thick, black!40] (3.20,0.00) -- (4.80,0.00);
    \draw[thick, black!40] (4.80,0.00) -- (6.40,0.00);
    \draw[thick, black!40] (6.40,0.00) -- (8.00,0.00);
    \draw[thick, black!40] (8.00,0.00) -- (8.96,-0.65);
    \draw[thick, black!40] (8.96,-0.65) -- (8.96,-1.30);
    \draw[thick, black!40] (8.96,-1.30) -- (8.96,-1.95);
    \draw[thick, black!40] (8.96,-1.95) -- (8.96,-2.60);
    \draw[thick, black!40] (8.96,-2.60) -- (8.00,-3.30);
    \draw[thick, black!40] (8.00,-3.30) -- (6.40,-3.30);
    \draw[thick, black!40] (6.40,-3.30) -- (4.80,-3.30);
    \draw[thick, black!40] (4.80,-3.30) -- (3.20,-3.30);
    \draw[thick, black!40] (3.20,-3.30) -- (1.60,-3.30);
    \draw[thick, black!40] (1.60,-3.30) -- (0.00,-3.30);
    \draw[thick, black!40] (0.00,-3.30) -- (-0.96,-3.95);
    \draw[thick, black!40] (-0.96,-3.95) -- (-0.96,-4.60);
    \draw[thick, black!40] (-0.96,-4.60) -- (-0.96,-5.25);
    \draw[thick, black!40] (-0.96,-5.25) -- (-0.96,-5.90);
    \draw[thick, black!40] (-0.96,-5.90) -- (-0.96,-6.55);
    \draw[thick, black!40] (-0.96,-6.55) -- (-0.96,-7.20);
    \draw[thick, black!40] (-0.96,-7.20) -- (0.00,-7.90);
    \draw[thick, black!40] (0.00,-7.90) -- (1.60,-7.90);
    \draw[thick, black!40] (1.60,-7.90) -- (3.20,-7.90);
    \draw[thick, black!40] (3.20,-7.90) -- (4.80,-7.90);
    \draw[thick, black!40] (4.80,-7.90) -- (6.40,-7.90);
    \draw[thick, black!40] (6.40,-7.90) -- (8.00,-7.90);
    \draw[thick, black!40] (8.00,-7.90) -- (8.96,-8.55);
    \draw[thick, black!40] (8.96,-8.55) -- (8.96,-9.20);
    \draw[thick, black!40] (8.96,-9.20) -- (8.96,-9.85);
    \draw[thick, black!40] (8.96,-9.85) -- (8.96,-10.50);
    \draw[thick, black!40] (8.96,-10.50) -- (8.00,-11.20);
    \draw[thick, black!40] (8.00,-11.20) -- (6.40,-11.20);
    \draw[thick, black!40] (6.40,-11.20) -- (4.80,-11.20);
    \draw[thick, black!40] (4.80,-11.20) -- (3.20,-11.20);
    \draw[thick, black!40] (3.20,-11.20) -- (1.60,-11.20);
    \draw[thick, black!40] (1.60,-11.20) -- (0.00,-11.20);

    \node[nd, copycolor] (v0) at (0.00,0.00) {};
    \node[above=1pt, font=\tiny] at (v0) {$(0{,}0{,}0{,}0)$};
    \node[nd, copycolor] (v1) at (1.60,0.00) {};
    \node[above=1pt, font=\tiny] at (v1) {$(1{,}0{,}0{,}0)$};
    \node[nd, joinone] (v2) at (3.20,0.00) {};
    \node[above=1pt, font=\tiny] at (v2) {$(0{,}0{,}0)$};
    \node[nd, joinone] (v3) at (4.80,0.00) {};
    \node[above=1pt, font=\tiny] at (v3) {$(1{,}0{,}0)$};
    \node[nd, copycolor] (v4) at (6.40,0.00) {};
    \node[above=1pt, font=\tiny] at (v4) {$(1{,}1{,}0{,}0)$};
    \node[nd, copycolor] (v5) at (8.00,0.00) {};
    \node[above=1pt, font=\tiny] at (v5) {$(0{,}1{,}0{,}0)$};
    \node[nd, jointwo] (v6) at (8.96,-0.65) {};
    \node[right=1pt, font=\tiny] at (v6) {$(0{,}0)$};
    \node[nd, jointwo] (v7) at (8.96,-1.30) {};
    \node[right=1pt, font=\tiny] at (v7) {$(1{,}0)$};
    \node[nd, jointwo] (v8) at (8.96,-1.95) {};
    \node[right=1pt, font=\tiny] at (v8) {$(2{,}0)$};
    \node[nd, jointwo] (v9) at (8.96,-2.60) {};
    \node[right=1pt, font=\tiny] at (v9) {$(3{,}0)$};
    \node[nd, copycolor] (v10) at (8.00,-3.30) {};
    \node[below=1pt, font=\tiny] at (v10) {$(0{,}1{,}1{,}0)$};
    \node[nd, copycolor] (v11) at (6.40,-3.30) {};
    \node[below=1pt, font=\tiny] at (v11) {$(1{,}1{,}1{,}0)$};
    \node[nd, joinone] (v12) at (4.80,-3.30) {};
    \node[below=1pt, font=\tiny] at (v12) {$(1{,}1{,}0)$};
    \node[nd, joinone] (v13) at (3.20,-3.30) {};
    \node[below=1pt, font=\tiny] at (v13) {$(0{,}1{,}0)$};
    \node[nd, copycolor] (v14) at (1.60,-3.30) {};
    \node[below=1pt, font=\tiny] at (v14) {$(1{,}0{,}1{,}0)$};
    \node[nd, copycolor] (v15) at (0.00,-3.30) {};
    \node[below=1pt, font=\tiny] at (v15) {$(0{,}0{,}1{,}0)$};
    \node[nd, jointhree] (v16) at (-0.96,-3.95) {};
    \node[left=1pt, font=\tiny] at (v16) {$(0)$};
    \node[nd, jointhree] (v17) at (-0.96,-4.60) {};
    \node[left=1pt, font=\tiny] at (v17) {$(1)$};
    \node[nd, jointhree] (v18) at (-0.96,-5.25) {};
    \node[left=1pt, font=\tiny] at (v18) {$(2)$};
    \node[nd, jointhree] (v19) at (-0.96,-5.90) {};
    \node[left=1pt, font=\tiny] at (v19) {$(3)$};
    \node[nd, jointhree] (v20) at (-0.96,-6.55) {};
    \node[left=1pt, font=\tiny] at (v20) {$(4)$};
    \node[nd, jointhree] (v21) at (-0.96,-7.20) {};
    \node[left=1pt, font=\tiny] at (v21) {$(5)$};
    \node[nd, copycolor] (v22) at (0.00,-7.90) {};
    \node[above=1pt, font=\tiny] at (v22) {$(0{,}0{,}1{,}1)$};
    \node[nd, copycolor] (v23) at (1.60,-7.90) {};
    \node[above=1pt, font=\tiny] at (v23) {$(1{,}0{,}1{,}1)$};
    \node[nd, joinone] (v24) at (3.20,-7.90) {};
    \node[above=1pt, font=\tiny] at (v24) {$(0{,}1{,}1)$};
    \node[nd, joinone] (v25) at (4.80,-7.90) {};
    \node[above=1pt, font=\tiny] at (v25) {$(1{,}1{,}1)$};
    \node[nd, copycolor] (v26) at (6.40,-7.90) {};
    \node[above=1pt, font=\tiny] at (v26) {$(1{,}1{,}1{,}1)$};
    \node[nd, copycolor] (v27) at (8.00,-7.90) {};
    \node[above=1pt, font=\tiny] at (v27) {$(0{,}1{,}1{,}1)$};
    \node[nd, jointwo] (v28) at (8.96,-8.55) {};
    \node[right=1pt, font=\tiny] at (v28) {$(0{,}1)$};
    \node[nd, jointwo] (v29) at (8.96,-9.20) {};
    \node[right=1pt, font=\tiny] at (v29) {$(1{,}1)$};
    \node[nd, jointwo] (v30) at (8.96,-9.85) {};
    \node[right=1pt, font=\tiny] at (v30) {$(2{,}1)$};
    \node[nd, jointwo] (v31) at (8.96,-10.50) {};
    \node[right=1pt, font=\tiny] at (v31) {$(3{,}1)$};
    \node[nd, copycolor] (v32) at (8.00,-11.20) {};
    \node[below=1pt, font=\tiny] at (v32) {$(0{,}1{,}0{,}1)$};
    \node[nd, copycolor] (v33) at (6.40,-11.20) {};
    \node[below=1pt, font=\tiny] at (v33) {$(1{,}1{,}0{,}1)$};
    \node[nd, joinone] (v34) at (4.80,-11.20) {};
    \node[below=1pt, font=\tiny] at (v34) {$(1{,}0{,}1)$};
    \node[nd, joinone] (v35) at (3.20,-11.20) {};
    \node[below=1pt, font=\tiny] at (v35) {$(0{,}0{,}1)$};
    \node[nd, copycolor] (v36) at (1.60,-11.20) {};
    \node[below=1pt, font=\tiny] at (v36) {$(1{,}0{,}0{,}1)$};
    \node[nd, copycolor] (v37) at (0.00,-11.20) {};
    \node[below=1pt, font=\tiny] at (v37) {$(0{,}0{,}0{,}1)$};

    \draw[->, thick, blue!60] (0.00,0.55) -- (1.60,0.55);
    \draw[->, thick, blue!60] (8.00,0.55) -- (6.40,0.55);
    \draw[->, thick, blue!60] (8.00,-3.85) -- (6.40,-3.85);
    \draw[->, thick, blue!60] (0.00,-3.85) -- (1.60,-3.85);
    \draw[->, thick, blue!60] (0.00,-7.35) -- (1.60,-7.35);
    \draw[->, thick, blue!60] (8.00,-7.35) -- (6.40,-7.35);
    \draw[->, thick, blue!60] (8.00,-11.75) -- (6.40,-11.75);
    \draw[->, thick, blue!60] (0.00,-11.75) -- (1.60,-11.75);

    \draw[decorate, decoration={brace, amplitude=6pt, mirror}, thick, black!50]
        (9.76,0.40) -- (9.76,-3.70)
        node[midway, right=8pt, font=\footnotesize] {$L^c_2$ copy $0$};
    \draw[decorate, decoration={brace, amplitude=6pt, mirror}, thick, black!50]
        (9.76,-7.50) -- (9.76,-11.60)
        node[midway, right=8pt, font=\footnotesize] {$L^c_2$ copy $1$};
    \node[font=\footnotesize, text=red!60!black] at (-1.76,-5.57) {\rotatebox{90}{join $c(2){=}5$}};
    \node[font=\tiny, text=orange!70!black] at (9.66,-1.62) {\rotatebox{90}{join $c(1){=}3$}};
    \node[font=\tiny, text=orange!70!black] at (9.66,-9.52) {\rotatebox{90}{join $c(1){=}3$}};

    \node[nd, copycolor, minimum size=7pt] at (0,-12.40) {};
    \node[right=3pt, font=\scriptsize] at (0.15,-12.40) {$L_0$ copy};
    \node[nd, joinone, minimum size=7pt] at (2.0,-12.40) {};
    \node[right=3pt, font=\scriptsize] at (2.15,-12.40) {level-1 join};
    \node[nd, jointwo, minimum size=7pt] at (4.4,-12.40) {};
    \node[right=3pt, font=\scriptsize] at (4.55,-12.40) {level-2 join};
    \node[nd, jointhree, minimum size=7pt] at (6.8,-12.40) {};
    \node[right=3pt, font=\scriptsize] at (6.95,-12.40) {level-3 join};
\end{tikzpicture}
    \caption{$L_3^c$ for $c(0)=1,c(1)=3,c(2)=5$.}
    \label{fig:L_n}
\end{figure}
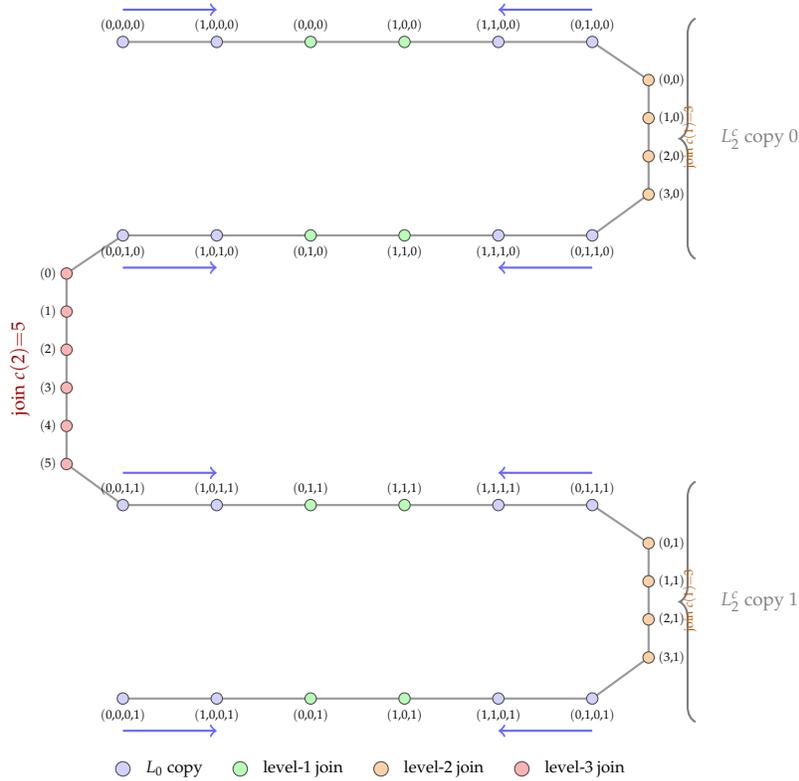

To develop some intuition on what (iii) says about a vertex $v$ in $L^c_n$: the number $|v|-1$ indicates that $v$ was added that many stages ago in the recursion, at the position of $p_k$ in the path joining the two copies of the previous paths; the sequence $t$ tells the story, in order, of which of the two copies of the previous path the vertex lies in.
In other words, vertices of the form $(p_k)^\smallfrown t$ for $t\in 2^n$ are precisely those that were copied from the original $L^c_0$ in all possible positions along $L^c_n$, doubling the number of copies at each stage.

Based on this discussion, a vertex in some $L^c_n$ is completely determined by the stage $n-m$ at which it was added in the position of $p_k$ and then copied according to the binary sequence $t$.

\section{A notion of smallness for copies}\label{sec:notion-smallness}

Now, I define a notion of smallness analogous to the one presented in \cite{Bernshteyn2024}.
By definition, the set of edges of $G$ is the continuous image of some Polish space $E$, let us say under $\pi$.
For a finite graph $H$, consider $\Hom(H,G)$ as the set of all maps

\begin{align*}
    &\varphi\colon V(H)\to V(G)\\
    &\varphi\colon E(H)\to E
\end{align*}

that satisfy that for all $uv\in E(H)$, $\pi(\varphi(uv))=\varphi(u)\varphi(v)$.
(Formally, $\varphi$ is really two maps, however I assume that $V(H)\cap E(H)=\emptyset$ and so there is never any confusion.)
Notice that $\Hom(H,G)$ is a Polish space, being a closed subset of the product space $X^{V(H)}\times(E)^{E(H)}$.

Given $\mathcal H\subseteq\Hom(H,G)$, I define $\mathcal H(u):=\{\varphi(u):\varphi\in\mathcal H\}$, which is analytic whenever $\mathcal H$ is Borel.
Also, let $\mathcal H(uv):=\{\varphi(uv):\varphi\in\mathcal H\}$.

\begin{definition}\label{def:small}~ 
    \begin{enumerate}
        \item Let $k<\omega$. I say an analytic set $A\subseteq V(G)$ has property $\Phi(A,k)$ if all odd walks of $G$ with endpoints in $A$ have length at most $2k-1$.

        \item A Borel set $\mathcal L\subseteq\Hom(L^c_n,G)$ is called \emph{tiny} if there is a vertex $u\in V(L^c_n)$ and a natural $k$ such that $\Phi(\mathcal L(u),k)$.

        \item A set $\mathcal L\subseteq\Hom(L^c_n,G)$ is \emph{small} if it is the countable union of tiny sets. This notion defines a $\sigma$-ideal.

        \item A set is \emph{large} if it is not small. Notice that when $H$ is the graph of a single vertex $H=\bullet$, I can identify $\Hom(H,G)$ with $V(G)$ and then for any $\mathcal H\subseteq\Hom(\bullet,G)$, $\mathcal H(\bullet)$ is identified with $\mathcal H$.
    \end{enumerate}
\end{definition}

I now invoke the following auxiliary result that links the $\Phi$ property to Borel 2-colourability.
The proof proceeds by induction on $n$ using analytic separation; see \cite[Claims~3.3--3.5]{L0paper} for details.

\begin{lemma}[Claim~3.5 of \cite{L0paper}]\label{lemma:L0-claim3.5}
    If $\Phi(A,n)$, then there is an invariant (meaning union of connected components) Borel set $B\supseteq[A]_{E_G}$ that induces a 2-Borel-colourable subgraph of $G$, that is, there is a Borel proper colouring $c_B\colon G\restriction B\to2$.
\end{lemma}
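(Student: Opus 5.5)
Fix $A$ analytic with $\Phi(A,n)$, and let $D_j\subseteq X$ be the set of vertices joined to some point of $A$ by a walk of length exactly $j$; each $D_j$ is analytic, as a projection of a closed condition on walks (of $\pi$-preimages of edges) with one endpoint in $A$. The condition $\Phi(A,n)$ controls long odd walks, and the plan is to turn it into a parity invariant on the components meeting $A$. Then I will enlarge everything to Borel sets by the First Reflection Theorem (Lemma~\ref{lemma:first-reflection}) and colour by that parity.

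\textbf{Step 1 (parity within a component).} If a vertex $x$ is reached from $a\in A$ by an even walk and from $a'\in A$ by an odd walk, then concatenating gives an odd walk from $a$ to $a'$. Padding with back-and-forth steps along an edge raises its length by any even amount, and then $\Phi(A,n)$ fails. If $A$ contains a vertex with no neighbour, I discard it; this only costs an isolated Borel-colourable piece. Hence the sets
\[
D^{0}=\bigcup_{j}D_{2j},\qquad D^{1}=\bigcup_j D_{2j+1}
\]
are disjoint analytic sets. Their union is $[A]_{E_G}$, and every edge of $G$ inside $[A]_{E_G}$ joins $D^0$ to $D^1$. So the colouring by parity is proper on $[A]_{E_G}$, and the remaining work is to make it Borel on a Borel invariant superset.

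\textbf{Step 2 (reflection).} Define $\Psi$ on pairs $(C_0,C_1)$, coded as a single subset of $X\sqcup X$, by the requirement that $C_0\cap C_1=\emptyset$ and that no $G$-edge lies inside $C_0$ or inside $C_1$. This is $\Pi^1_1$ on $\Sigma^1_1$: for an analytic family $A_y$, the failure of $\Psi$ is witnessed existentially by a point or an edge, which is a $\Sigma^1_1$ condition. Applying Lemma~\ref{lemma:first-reflection} to $(D^0,D^1)$ gives Borel sets $B_0\supseteq D^0$ and $B_1\supseteq D^1$, disjoint and each $G$-independent. Then $c_B=i$ on $B_i$ is a Borel proper colouring of $G\restriction(B_0\cup B_1)$.

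\textbf{Step 3 (invariance), the main obstacle.} The set $B_0\cup B_1$ need not be a union of components, and taking its $E_G$-saturation can destroy independence. I would strengthen $\Psi$ so that reflection returns closure properties. For $(C_0,C_1)$, require disjointness and independence as before. In addition, require that every $G$-neighbour of a point of $C_i$ lies in $C_{1-i}$; this clause is $\Pi^1_1$, since its failure is witnessed by an edge leaving. The pair $(D^0,D^1)$ satisfies this, because it is closed under taking neighbours with a parity flip.

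The catch is that this strengthened condition is no longer hereditary-upward in the form the plain reflection theorem needs. I would therefore iterate in the standard way: reflect to a Borel $B^{(0)}$, note that its neighbour-closure is analytic and still satisfies the weak $\Psi$, reflect again, and so on for $\omega$ steps. Taking unions $B_i=\bigcup_m B^{(m)}_i$ gives Borel sets that are disjoint, independent and neighbour-closed with parity flip. Hence $B=B_0\cup B_1$ is invariant, and $c_B$ is Borel and proper.

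I expect the delicate point to be verifying the $\Pi^1_1$-on-$\Sigma^1_1$ property at each stage. The one-step closure of a Borel set under neighbours is analytic and uniformly parametrized, so this should go through. Alternatively, I would follow the induction on $n$ in \cite{L0paper}, separating $D_{\le 2k}$-parity classes at each bounded radius.
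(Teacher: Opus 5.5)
Your Steps 1 and 2 are fine, but Step 3 has a genuine gap.

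First, the neighbour-closure clause is not $\Pi^1_1$ on $\Sigma^1_1$. Its failure says that some $G$-neighbour of a point of $C_i$ lies \emph{outside} $C_{1-i}$. For an analytic family this puts a co-analytic condition under an existential quantifier, so the problem is not one of being ``hereditary''.

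More seriously, the property you carry through the $\omega$-iteration (``disjoint and each class $G$-independent'') is \emph{not} preserved under the parity-flipped neighbour closure. That is, it can fail when you pass from a reflected Borel pair $(B^{(0)}_0,B^{(0)}_1)$ to $(B^{(0)}_0\cup N(B^{(0)}_1),\,B^{(0)}_1\cup N(B^{(0)}_0))$. Reflection gives you \emph{some} Borel superset of $(D^0,D^1)$, with no control over the extra points, which may lie in components disjoint from $[A]_{E_G}$:
\begin{itemize}
\item If stray points $u,w\in B^{(0)}_0$ are joined by a path $u,a,b,w$ of length $3$ (permitted, since $u,w$ are not adjacent), then $a$ and $b$ both enter the new class $1$ and are adjacent.
\item A single stray point of $B^{(0)}_0$ lying on a triangle does the same.
\end{itemize}
So the neighbour closure need not satisfy the weak $\Psi$, the next reflection cannot be applied, and the final union need not be independent.

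The missing idea is to reflect the walk-parity condition itself, which your Step 1 already isolates: padding shows that $\Phi(A,n)$ just says no odd walk joins two points of $A$. Let $\Psi'(C_0,C_1)$ say that no walk of odd length joins two points of the same $C_i$, and no walk of even length (length $0$ included) joins a point of $C_0$ to a point of $C_1$. This works for the following reasons.
\begin{itemize}
\item It implies disjointness and independence.
\item It is $\Pi^1_1$ on $\Sigma^1_1$, by the same projection argument as in the Claim inside the proof of Lemma~\ref{lemma:Ln_basics}(c).
\item $(D^0,D^1)$ satisfies it by your Step 1.
\item It survives the parity-flipped neighbour closure. Suppose $x$ is put in class $i$ because it is $d_x\in\{0,1\}$ steps from an anchor in $C_j$, with $j+d_x\equiv i \pmod 2$. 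A walk of length $L$ from $x$ in class $i$ to $y$ in class $i'$ yields a walk of length $d_x+L+d_y$ between anchors. $\Psi'$ forces its parity to be $j+j'$, so $L\equiv i+i'$.
\item It survives increasing countable unions, since a violating walk has both endpoints in some finite stage.
\end{itemize}
With $\Psi'$ in place of the weak $\Psi$, your iteration of reflection and closure does yield Borel sets $B_0,B_1$ with $B_0\cup B_1\supseteq[A]_{E_G}$ invariant, and $c_B=i$ on $B_i$ is a Borel proper colouring. This is a reflection-based alternative to the induction on $n$ with analytic separation that the paper cites from the original article.
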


\begin{theorem}\label{thm:small}
    If $\chi_B(G)>2$ then $V(G)$ is large.
\end{theorem}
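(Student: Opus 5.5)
I argue by contraposition: assuming $V(G)$ is small, I build a Borel proper $2$-colouring of $G$, which contradicts $\chi_B(G)>2$. By the identification in Definition~\ref{def:small} for $H=\bullet=L^c_0$, smallness of $V(G)$ gives Borel sets $A_0,A_1,A_2,\dots\subseteq V(G)$ with $V(G)=\bigcup_i A_i$. Each $A_i$ is tiny, so there is some $k_i<\omega$ with $\Phi(A_i,k_i)$. There is only one vertex to choose here, so tininess is exactly this condition.

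Next, I apply Lemma~\ref{lemma:L0-claim3.5} to each $A_i$. This yields an invariant Borel set $B_i\supseteq[A_i]_{E_G}$ together with a Borel proper colouring $c_i\colon G\restriction B_i\to 2$. Since the $A_i$ cover $V(G)$, the $B_i$ also cover $V(G)$.

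I then disjointify the cover by setting $B_i':=B_i\setminus\bigcup_{j<i}B_j$. Each $B_i'$ is Borel, and each is invariant, because Boolean combinations of unions of connected components are again unions of connected components. Define $c\colon V(G)\to 2$ by $c(x):=c_i(x)$ for the unique $i$ with $x\in B_i'$. This map is Borel, being a countable Borel piecewise combination of Borel maps.

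To check that $c$ is proper, take any edge $xy\in G$. Then $x$ and $y$ lie in the same connected component, so by invariance they lie in the same $B_i'\subseteq B_i$. Hence $c(x)=c_i(x)\neq c_i(y)=c(y)$, since $c_i$ is a proper colouring on $G\restriction B_i$. Therefore $\chi_B(G)\le 2$, the desired contradiction.

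I expect no real obstacle, since all the descriptive set-theoretic work is contained in Lemma~\ref{lemma:L0-claim3.5}. The one point to verify is that invariance survives the disjointification. Without it, an edge could straddle two pieces coloured by different $c_i$, and the glued colouring would fail to be proper.
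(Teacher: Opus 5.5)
Your proof is correct and follows the paper's argument. You write $V(G)$ as a countable union of tiny Borel sets, apply Lemma~\ref{lemma:L0-claim3.5} to each to get invariant Borel sets $B_i$ with Borel $2$-colourings, and glue by least index, where your disjointified $B_i'$ is exactly the paper's rule $m(x)=\min\{m<\omega : x\in B_m\}$; your explicit check that invariance survives the disjointification and keeps every edge inside one piece is the step the paper compresses into ``because each $B_m$ is $G$-invariant.''
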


\begin{proof}[Proof of theorem]
    Suppose that $V(G)=\bigcup_{m<\omega}\mathcal H_m$ where each $\mathcal H_m$ is a tiny Borel set.
    By Lemma~\ref{lemma:L0-claim3.5}, there is a $G$-invariant Borel set $B_m\supseteq[\mathcal H_m]_{E_G}$ with a Borel 2-colouring $c_m$.
    Define $c\colon V(G)\to2$ by letting $m(x):=\min\{m<\omega:x\in B_m\}$ and $c(x)=c_{m(x)}(x)$.
    Then $c$ is a Borel 2-colouring because each $B_m$ is $G$-invariant.
\end{proof}

$L^c_{n+1}$ is obtained from two copies of $L^c_n$ joined by a path, so I need a way to refer to the distinct copies of $L^c_n$ inside of $L^c_{n+1}$.
By (iii) above, for every $\varphi\in\Hom(L^c_{n+1},G)$ and $i<2$, I define $\varphi^i\in\Hom(L^c_n,G)$ by $\varphi^i(u)=\varphi(u^\frown (i))$ for all $u\in V(L^c_n)$ and $\varphi^i(uv)=\varphi(u^\frown(i),v^\frown(i))$ for all $(u,v)\in E(L^c_n)$.
Thus, given $\mathcal L\subseteq\Hom(L^c_n,G)$, I define $\mathcal L^{+c}$ as the set of all $\varphi\in\Hom(L^c_{n+1},G)$ for which $\varphi^i\in\mathcal L$ for both $i<2$.
Notice that if $\mathcal L$ is Borel, then so is $\mathcal L^{+c}$.

The length of the path I add needs to be updated dynamically throughout the proof.
Formally, one adjusts the parameter $c$ by moving into a different branch of the Baire space when constructing the graphs $L^c_n$.
Notice that, for two reals $c,d\in\omega^\omega$, if $d\restriction n=c\restriction n$, then $L^c_k=L^d_k$ for all $k<n$.

\begin{lemma}\label{lemma:Ln_basics}~ 
    \begin{enumerate}[label=(\alph*)]
        \item Suppose that $n>0$, that $c$ only takes odd values, that $k$ is a natural and $t\in2^{<n}$ is such that $(p_k)^\smallfrown t^\smallfrown(i)$, for $i<2$, are vertices of $L^c_n$ (see (iii) above). Then, they are an odd distance apart in $L^c_n$.

        \item For any $n<\omega$, if $\mathcal L\subseteq\Hom(L^c_n,G)$ is a large Borel set, then for any $N<\omega$ there exists $d\in\omega^\omega$ such that $d\restriction n=c\restriction n$, $d(n)\geq N$ is odd, and $\mathcal L^{+d}$ is a nonempty subset of $\Hom(L^{d}_{n+1},G)$.

        \item For any $n<\omega$, if $\mathcal L\subseteq\Hom(L^c_n,G)$ is a large Borel set, then there exists $d\in\omega^\omega$ such that $d\restriction n=c\restriction n$ and $\mathcal L^{+d}$ is a large subset of $\Hom(L^{d}_{n+1},G)$.
    \end{enumerate}
\end{lemma}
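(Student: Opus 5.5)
Parts (a) and (b) are combinatorial and will be handled directly; (c) carries the descriptive set theory and goes through the First Reflection Theorem (Lemma~\ref{lemma:first-reflection}).

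\textbf{Part (a).} Since vertices of $L^c_n$ have the form $(p_k)^\smallfrown t$ with $t\in 2^{\le n}$, and the last appended digit records the copy chosen at the final stage, the two vertices $(p_k)^\smallfrown t^\smallfrown(i)$, $i<2$, are the two images in $L^c_n$ of the single vertex $w=(p_k)^\smallfrown t$ of $L^c_{n-1}$. Because $L^c_n$ is a path, the distance between them is the length of the unique subpath joining them. This subpath goes from $w^\smallfrown(0)$ to $(e^{n-1}_1)^\smallfrown(0)$, then along the joining path of $c(n-1)+2$ edges, then back from $(e^{n-1}_1)^\smallfrown(1)$ to $w^\smallfrown(1)$. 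Its length is therefore
\[
2\,\mathrm{dist}_{L^c_{n-1}}(w,e^{n-1}_1)+c(n-1)+2,
\]
which is odd because $c(n-1)$ is odd.

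\textbf{Part (b).} Let $u=e^n_1$. Since $\mathcal L$ is large, it is not tiny, so $\Phi(\mathcal L(u),k)$ fails for every $k$. Taking $k=N+2$, there is an odd walk $(x=w_0,w_1,\dots,w_\ell=y)$ of $G$ with $x,y\in\mathcal L(u)$ and $\ell\ge 2N+3$. Choose $\varphi_0,\varphi_1\in\mathcal L$ with $\varphi_0(u)=x$ and $\varphi_1(u)=y$, and set $d(n)=\ell-2$, which is odd and at least $N$; let $d$ agree with $c$ below $n$. Define $\varphi\in\Hom(L^d_{n+1},G)$ by $\varphi^i=\varphi_i$. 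Send the joining path $(u^\smallfrown(0),p_0,\dots,p_{d(n)},u^\smallfrown(1))$ onto the walk, choosing edge witnesses in $E$ through $\pi$. Homomorphisms need not be injective, so $\varphi\in\mathcal L^{+d}$ and this set is nonempty.

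\textbf{Part (c).} Suppose toward a contradiction that $\mathcal L^{+d}$ is small for every admissible $d$. I expect to need only the $d$ obtained in (b), with $d(n)$ odd and large; for the plan, fix one such $d$ and write $\mathcal L^{+d}=\bigcup_j\mathcal T_j$ with each $\mathcal T_j$ tiny at a vertex $u_j$ with bound $k_j$. The argument has three steps.

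\emph{Step 1: reduce path vertices to copy vertices.} If $u_j$ is one of the new path vertices $p_r$, it lies at distance $r'\le d(n)+1$ from $e^n_1{}^\smallfrown(0)$. Whenever $\Phi(A,k)$ holds, the set of points joined to $A$ by walks of length $\le r'$ satisfies $\Phi(\cdot,k+r')$, since any long odd walk between such points extends to one between points of $A$ (padding parity by the even back-and-forth moves). Hence $\mathcal T_j$ is also tiny at $e^n_1{}^\smallfrown(0)$. So I may assume every $u_j$ is of the form $v^\smallfrown(i)$ with $v\in V(L^c_n)$.

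\emph{Step 2: Fubini argument over the two copies.} For $\varphi_0\in\mathcal L$, consider the section $\{\varphi_1:\exists\varphi\in\mathcal T_j,\ \varphi^0=\varphi_0,\ \varphi^1=\varphi_1\}$. Let $A_j$ be the set of $\varphi_0\in\mathcal L$ for which the union of such sections, over those $j$ tiny in copy $0$, meets $\mathcal L$ in a large set; define $A_j$ symmetrically for copy $1$. The aim is to show that all the relevant exceptional sets are small. Then choose $\varphi_0\in\mathcal L$ outside them, which is possible since $\mathcal L$ is large. Now $\mathcal L$ is covered by the copy-$1$ tiny sets (whose sections at $\varphi_0$ remain tiny at $v$) together with the sets that are small by the choice of $\varphi_0$. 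This makes $\mathcal L$ small, a contradiction.

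\emph{Step 3: definability, the main obstacle.} Step 2 requires the exceptional sets to be Borel, or at least contained in Borel small sets. This is where Lemma~\ref{lemma:first-reflection} enters. For analytic $A$, the property $\Phi(A,k)$ says there is no odd walk of length $>2k-1$ with endpoints in $A$, and this is $\Pi^1_1$ on $\Sigma^1_1$. Reflection therefore replaces the analytic projections $\mathcal T_j(u_j)$, and the analytic sets of the form ``$\varphi_0$ whose section is tiny with bound $k$'', by Borel supersets that still satisfy $\Phi$ with the same $k$. With these Borel supersets in hand, smallness of the exceptional sets can be witnessed by countably many tiny Borel sets, and the choice of $\varphi_0$ in Step 2 becomes legitimate. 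I expect the bulk of the work to be formulating the correct $\Pi^1_1$-on-$\Sigma^1_1$ family so that a single reflection yields these Borel witnesses.
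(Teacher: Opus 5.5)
Your parts (a) and (b) are correct and are essentially the paper's arguments. Part (c), however, has a genuine gap at Step 2, and fixing a single $d$ in advance is a second, independent problem.

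\textbf{The Fubini step.} It treats $\mathcal L^{+d}$ as if it were a product over the two copies, but it is not one. For a fixed $\varphi_0$, the section $\{\varphi_1:\exists\varphi\in\mathcal L^{+d},\ \varphi^0=\varphi_0,\ \varphi^1=\varphi_1\}$ contains only those $\varphi_1\in\mathcal L$ for which $\varphi_1(e^n_1)$ is joined to $\varphi_0(e^n_1)$ by a walk of length exactly $d(n)+2$. All these values lie in a single connected component of $G$. Covering this section by the copy-$1$ tiny pieces says nothing about the rest of $\mathcal L$, so ``$\mathcal L$ is small'' does not follow. You would in any case need a Fubini-type property of the small ideal that is not available here.

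\textbf{Fixing one $d$.} Here is a counterexample. In $G=\mathbb L_c$ with every $c(i)\geq 101$ odd, take $n=0$ and $\mathcal L=\{(0,0,x):x\in 2^\omega\}\cup\{v\}$, where $v=(1,0,0^\omega)$ is a neighbour of $w=(0,0,0^\omega)$. By the category argument in the proof of Lemma~\ref{lemma:L0_basics}(c), $\mathcal L$ is large. The construction in (b) may return an odd $d(0)\leq 99$ via a back-and-forth walk between $v$ and $w$. But distinct points $(0,0,x)$ are at distance at least $103$ in this forest. So every walk of length $d(0)+2<103$ with both ends in $\mathcal L$ has an end at $v$. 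Since $\{v\}$ carries no odd walk, $\mathcal L^{+d}$ is then the union of two sets tiny at an endpoint.

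\textbf{The missing idea.} No product structure is needed; (b) itself is the engine.
\begin{itemize}
\item Assume $\mathcal L^{+d}=\bigcup_m\mathcal F^d_m$ with each $\mathcal F^d_m$ tiny, for \emph{every} $d$ agreeing with $c$ off $n$. There are only countably many such $d$.
\item After your Step 1 reduction, each $\mathcal F^d_m$ is tiny at some $u^\smallfrown(i)$ with $u\in V(L^c_n)$, where $u,i$ depend on $d,m$.
\item Reflect the analytic set $\mathcal F^d_m(u^\smallfrown(i))$ to a Borel $B^d_m$ with the same $\Phi$ property. Put $\mathcal H^d_m=\{\psi\in\mathcal L:\psi(u)\in B^d_m\}$, which is tiny.
\item Then $\mathcal L_-=\mathcal L\setminus\bigcup_{m,d}\mathcal H^d_m$ is large and Borel. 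So (b) gives some $d$ and some $\varphi\in\mathcal L_-^{+d}\subseteq\mathcal L^{+d}$.
\item Now $\varphi\in\mathcal F^d_m$ for some $m$, whence $\varphi^i\in\mathcal H^d_m$. This contradicts $\varphi^i\in\mathcal L_-$.
\end{itemize}
This is exactly why $d$ cannot be fixed beforehand: it is produced by (b) only after $\mathcal L_-$ is known.

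\textbf{A smaller slip.} In Step 1 the bound must be stated for points joined to $A$ by walks of length exactly $r'$, which is all the application uses. With ``$\leq r'$'' the two connecting walks may have different parities, and back-and-forth padding never changes parity.
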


\begin{proof}
    For (a), observe that both vertices come from the same vertex $(p_k)^\smallfrown t$ in $L^c_{|t|+1}$, and so their distance in $L^c_n$ is twice the distance from $(p_k)^\smallfrown t$ to $e_1^{|t|}$ in $L^c_{|t|+1}$ plus the length of the joining path at that stage, which is $c(|t|)+2$.
    Since $c(|t|)$ is odd, $c(|t|)+2$ is odd, and hence their total distance is even plus odd, which is odd.

    For (b), since $\mathcal L$ is large, it is not tiny.
    Hence, for every vertex $u\in V(L^c_n)$ and every natural $k$, $\Phi(\mathcal L(u),k)$ fails.
    In particular, fix the gluing vertex $u_0:=e_1^n\in V(L^c_n)$ (recall that $e_1^0=p_0$).
    Then for every $k$ there exist $\varphi_0,\varphi_1\in\mathcal L$ and an odd walk $(v_0,v_1,\dots,v_m)$ in $G$ with $v_0=\varphi_0(u_0)$, $v_m=\varphi_1(u_0)$, and $m>2k-1$.
    Choosing $k$ large enough, I can ensure that $m\geq N+2$ and $m$ is odd.
    Since $m$ is odd, $d(n):=m-2$ is also odd and satisfies $d(n)\geq N$.
    For each $1\leq j\leq m$, since $(v_{j-1},v_j)\in E(G)=\mathrm{im}(\pi)$, I pick $e_j\in E$ with $\pi(e_j)=(v_{j-1},v_j)$.
    Setting $d(x)=c(x)$ for all $x\neq n$ and noting $L^c_n=L^d_n$ (since $d\restriction n=c\restriction n$), I define $\varphi\in\Hom(L_{n+1}^d,G)$ by $\varphi^0=\varphi_0$, $\varphi^1=\varphi_1$, $\varphi(p_i)=v_{i+1}$ for $0\leq i\leq m-2=d(n)$, and the edge assignments $\varphi(u_0^\frown (0),p_0)=e_1$, $\varphi(p_j,p_{j+1})=e_{j+1}$ for $0\leq j<m-2$, and $\varphi(p_{m-2},u_0^\frown (1))=e_m$.
    Then $\varphi\in\mathcal L^{+d}$.

    I prove (c) by contradiction.
    Recall that constructing $L^c_{n+1}$ from $L^c_n$ only requires knowing the value of $c(n)$.
    Suppose that for all $d$ that agree with $c$ except possibly in the $n$-th value, $\mathcal L^{+d}=\bigcup_m\mathcal F^d_m$ where each $\mathcal F^d_m$ is a tiny Borel subset of $\Hom(L^d_{n+1},G)$.
    For each $m$ and $d$, let $w^d_m\in V(L^d_{n+1})$ and $k^d_m<\omega$ be such that $\Phi(\mathcal F^d_m(w^d_m),k^d_m)$.
    
    Since $w^d_m\in V(L^d_{n+1})$, it is either a \emph{non-path vertex} of the form $u^\smallfrown(i)$ for some $u\in V(L^c_n)$ and $i<2$, or a \emph{path vertex} $p_j$ for some $0\leq j\leq d(n)$ in the joining path.
    I claim that in either case, there exists a vertex $u^d_m\in V(L^c_n)$ and $\ell^d_m<\omega$ such that $\Phi(\mathcal F^d_m({u^d_m}^\smallfrown(i^d_m)),\ell^d_m)$ for some $i^d_m<2$ where $u^d_m{}^\smallfrown(i^d_m)$ is a non-path vertex of $L^d_{n+1}$.

    Indeed, if $w^d_m=u^\smallfrown(i)$ is already non-path, take $u^d_m=u$, $i^d_m=i$, $\ell^d_m=k^d_m$.
    If $w^d_m=p_j$ is a path vertex of $L^d_{n+1}$, then the joining path connects $e_1^n{}^\smallfrown(0)$ to $e_1^n{}^\smallfrown(1)$ via $p_0,\dots,p_{d(n)}$.
    Since $p_j$ and $e_1^n{}^\smallfrown(0)$ are at distance $j+1$ in $L^d_{n+1}$, every homomorphism $\varphi\in\mathcal F^d_m$ maps them to vertices that are at walk-distance $j+1$ in $G$.
    Given any odd walk $(a_0,\dots,a_r)$ in $G$ with $a_0,a_r\in\mathcal F^d_m(e_1^n{}^\smallfrown(0))$, witnessed by $\varphi,\varphi'\in\mathcal F^d_m$, the walk $(\varphi(p_j),\dots,\varphi(e_1^n{}^\smallfrown(0))=a_0,\dots,a_r=\varphi'(e_1^n{}^\smallfrown(0)),\dots,\varphi'(p_j))$ has length $r+2(j+1)$, which has the same parity as $r$ (hence is also odd), and has endpoints in $\mathcal F^d_m(p_j)$.
    By $\Phi(\mathcal F^d_m(p_j),k^d_m)$, $r+2(j+1)\leq 2k^d_m-1$, so $r\leq 2(k^d_m-j-1)-1$.
    It follows that $\Phi(\mathcal F^d_m(e_1^n{}^\smallfrown(0)),\max\{0,k^d_m-j-1\})$.
    Thus, set $u^d_m=e_1^n$, $i^d_m=0$, $\ell^d_m=\max\{0,k^d_m-j-1\}$.
    
    \begin{claim}
        For each $m$ and $d$, there is a Borel set $B^d_m\supseteq\mathcal F^d_m({u^d_m}^\smallfrown(i^d_m))$ such that $\Phi(B^d_m,\ell^d_m)$.
    \end{claim}

    \begin{claimproof}
        I use the First Reflection Theorem (Lemma~\ref{lemma:first-reflection}).
        Fix $\ell=\ell^d_m$.
        It suffices to verify that $\Phi_\ell:=\{A\subseteq X:\Phi(A,\ell)\}$ is $\Pi^1_1$ on $\Sigma^1_1$.
        Let $Y$ be Polish and $A\subseteq Y\times X$ be $\Sigma^1_1$.
        Fix a Polish space $N$ and a continuous surjection $g\colon N\to A$.
        For each odd $r\geq 2\ell+1$, define
        \begin{align*}
            C_r:=\{(y,x_0,\dots,x_r,n_0,n_1,\hat e_1,\dots,\hat e_r)\in Y\times X^{r+1}\times N^2\times E^r:{}\\
            g(n_0)=(y,x_0)\land g(n_1)=(y,x_r)\land\textstyle\bigwedge_{j<r}\pi(\hat e_{j+1})=(x_j,x_{j+1})\}.
        \end{align*}
        Then $C_r$ is closed (by continuity of $g$ and $\pi$), and $D_r:=\mathrm{proj}_Y(C_r)$ is $\Sigma^1_1$.
        Hence $D:=\bigcup_{r\text{ odd},\,r\geq 2\ell+1}D_r$ is $\Sigma^1_1$, and
        $A_{\Phi_\ell}=\{y\in Y:A_y\in\Phi_\ell\}=Y\setminus D$
        is $\Pi^1_1$.
        Since $\mathcal F^d_m({u^d_m}^\smallfrown(i^d_m))$ is analytic and satisfies $\Phi(\cdot,\ell)$, the First Reflection Theorem yields the desired Borel superset $B^d_m$.
    \end{claimproof}

    Take $$\mathcal H^d_m=\{\varphi\in\mathcal L:\varphi(u^d_m)\in B^d_m\},$$ which is a Borel subset of $\Hom(L^c_n,G)$.
    I claim that $\mathcal H^d_m$ is tiny.
    Indeed, $\mathcal H^d_m(u^d_m)=\{\varphi(u^d_m):\varphi\in\mathcal L\land\varphi(u^d_m)\in B^d_m\}\subseteq B^d_m$, so $\Phi(\mathcal H^d_m(u^d_m),\ell^d_m)$.
    
    Now, the set of pairs $(u^d_m,i^d_m)$ ranges over the finite set $V(L^c_n)\times2$ (which does not depend on $d$), and $(m,d)$ range over countably many values.
    Thus $\bigcup_{m,d}\mathcal H^d_m$ is a countable union of tiny sets, hence small.
    The set $\mathcal L_-:=\mathcal L\setminus\bigcup_{m,d}\mathcal H^d_m$ is therefore large (and Borel).
    By part (b), there exist $\varphi$ and $d$ such that $\varphi\in\mathcal L_-^{+d}\subseteq\mathcal L^{+d}$.
    Then $\varphi\in\mathcal F^d_m$ for some $m$.
    It follows that $\varphi^{i^d_m}(u^d_m)=\varphi({u^d_m}^\smallfrown(i^d_m))\in\mathcal F^d_m({u^d_m}^\smallfrown(i^d_m))\subseteq B^d_m$, hence $\varphi^{i^d_m}\in\mathcal H^d_m$.
    But $\varphi\in\mathcal L_-^{+d}$ implies $\varphi^{i^d_m}\in\mathcal L_-$, contradicting $\mathcal L_-\cap\mathcal H^d_m=\emptyset$.
\end{proof}

\section{Construction of a minimal graph of Borel chromatic number at least 3}\label{sec:L0-construction}

The goal of this section is to construct a family of graphs $\mathbb L_c$ indexed by reals $c\in\omega^\omega$.

Now, fix $c$ and consider $X_c$ as the set of all tuples $(m,k,x)\in\mathbb N\times\mathbb N\times2^\mathbb N$ such that either $m=0$ and $k=0$, or $m\geq1$ and $k\leq c(m-1)$; this is a closed subspace of the product space, and hence Polish.
Define, for each $(m,k,x)$ with $m\leq n$, $\pi_n(m,k,x)$ as the vertex of $L^c_n$ determined by these parameters.
Formally, $\pi_n(m,k,x):=(p_k)^\smallfrown x\restriction(n-m)$.
For example, $\pi_3(1,2,0110\cdots)=(p_2,0)$ can be seen in Figure~\ref{fig:L_n} labeled as $(2,0)$.

Finally, $\mathbb L_c$ is the graph on $X_c$ where two $(n_i,k_i,x_i)$, for $i<2$, are adjacent if and only if the $\pi_n(n_i,k_i,x_i)$ are adjacent in all $L^c_n$ with $n\geq\max\{n_0,n_1\}$.
Some basic properties of $\mathbb L_c$ follow.

\begin{lemma}\label{lemma:L0_basics}~ 
    \begin{enumerate}[label=(\alph*)]
        \item Two vertices $(n_i,k_i,x_i)\in X_c$, for $i<2$, are in the same connected component of $\mathbb L_c$ if and only if there are $t_i\in2^{<\omega}$ and $x\in2^\omega$ such that $|t_0|-|t_1|=n_1-n_0$ and $x_i=t_i^\smallfrown x$.
        
        \item $\mathbb L_c$ has no cycles and all vertices of $\mathbb L_c$ have degree 2 except for the vertex $(0,0,\{0\}^\omega)$, which has degree 1.
        
        \item If, additionally, $c$ takes only odd values, then $\chi_B(\mathbb L_c)=3$.
    \end{enumerate}
\end{lemma}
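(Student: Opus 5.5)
The plan is to use the stability of $\pi_n$ across stages to reduce everything to the finite paths $L^c_n$, whose structure is given by properties (i)–(iii) and Lemma~\ref{lemma:Ln_basics}(a). A vertex $v=(m,k,x)$ appears in $L^c_n$ as $\pi_n(v)=(p_k)^\smallfrown x\restriction(n-m)$ for every $n\geq m$. Moreover, $L^c_n$ embeds into $L^c_{n+1}$ as copy $x(n-m)$ via $u\mapsto u^\smallfrown(i)$. So adjacency of $\pi_n(v_0),\pi_n(v_1)$ at one stage $n\geq\max\{n_0,n_1\}$ persists to later stages exactly when both vertices are copied into the same copy, i.e.\ when $x_0(n-n_0)=x_1(n-n_1)$.

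\textbf{Part (a).} For ($\Rightarrow$), I would induct along a path: adjacency at all large $n$ forces $x_0(n-n_0)=x_1(n-n_1)$ for all large $n$, i.e.\ the tails agree after a shift of $n_1-n_0$. This relation is transitive, so it holds on connected components. For ($\Leftarrow$), given such $t_i$ and $x$, I choose $n$ large enough that both vertices lie in the same copy of $L^c_{n}$ (namely the copy determined by the common tail). That copy is a path, so a path joins them in $L^c_n$. That path is carried identically into every later $L^c_{n'}$, since all its vertices share the tail $x$. Hence every edge on it is an edge of $\mathbb L_c$.

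\textbf{Part (b).} Degrees: for a vertex $v$, pick $n$ so large that $\pi_n(v)$ is not an endpoint of $L^c_n$ within its final copy. Then it has exactly two neighbours in the path $L^c_n$, and these persist. The exception is when $\pi_n(v)$ is an endpoint $e^n_i=(p_0)^\smallfrown(0)^{n-1\,\smallfrown}(i)$ for infinitely many $n$. This happens exactly for $v=(0,0,0^\omega)$, which is always an endpoint $e_0^n$ and so has degree $1$. Acyclicity: a cycle is finite, so it would appear as a cycle in some $L^c_n$, which is a path; contradiction.

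\textbf{Part (c).} This is the main content. For the upper bound, every degree is at most $2$ by (b), so Theorem~\ref{prop:brooks} gives $\chi_B(\mathbb L_c)\leq3$, once one checks that $\mathbb L_c$ is Borel. It is in fact closed: adjacency is decided from finitely many coordinates per $n$, and it is a countable intersection of clopen conditions. For the lower bound, suppose $f\colon X_c\to2$ is a Borel proper $2$-colouring. I would compare the vertices $(m,k,x)$ and $(m,k,x')$ where $x,x'$ differ only in coordinate $j$. By (a) they lie in the same component (take $t_i$ of equal length and $x$ the common tail). Their images at stage $n=m+j+1$ are $(p_k)^\smallfrown t^\smallfrown(i)$ with $t=x\restriction j$. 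By Lemma~\ref{lemma:Ln_basics}(a) these are an odd distance apart, and since $\mathbb L_c$ is acyclic that path is their distance in $\mathbb L_c$. So $f(m,k,x)\neq f(m,k,x')$ whenever $x,x'$ differ in exactly one coordinate. Fixing $m=k=0$, the map $x\mapsto f(0,0,x)$ is then a Borel function on $2^\omega$ that flips whenever exactly one bit is flipped. Hence it is invariant under changing an even finite number of bits and anti-invariant under an odd number. This contradicts the standard generic ergodicity argument for $E_0$: by Baire property, $f$ is constant on a comeagre set $C$. The map flipping the first bit is a homeomorphism, so $C$ and its image intersect, which yields points with equal colour that differ in one bit. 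This parity/Baire-category step is where I expect the real work to lie; the rest is bookkeeping with the labels from (iii).
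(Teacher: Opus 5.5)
Your proposal is correct and follows essentially the paper's route: (a) and (b) are read off from copies and common tails in the finite paths $L^c_n$, and (c) combines the degree bound with a Baire-category argument that produces two same-coloured vertices differing in a single bit, contradicting the odd distance from Lemma~\ref{lemma:Ln_basics}(a). The one soft spot is your claim that $x\mapsto f(0,0,x)$ is constant on a comeagre set. This does not follow from the Baire property alone, and not from generic ergodicity of $E_0$ either, since the colour classes are invariant only under even flips; it does follow from generic ergodicity of that even-flip subrelation, whose classes are dense. The paper avoids the issue by taking a basic open set $\{x: x\restriction r=t\}$ on which one colour class is comeagre and flipping bit $r$, which maps that set to itself.
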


\begin{proof}
    For (a), let $(n_i,k_i,x_i)\in X_c$ for $i<2$.
    If they are in the same connected component, then for all large enough $n$, $\pi_n(n_0,k_0,x_0)$ and $\pi_n(n_1,k_1,x_1)$ lie in the same copy of $L^c_{n-1}$ inside $L^c_n$.
    By the construction of $L^c_n$, two vertices $(p_{k_0})^\smallfrown x_0\restriction(n-n_0)$ and $(p_{k_1})^\smallfrown x_1\restriction(n-n_1)$ are in the same copy of $L^c_{n-1}$ in $L^c_n$ if and only if their last binary digits agree, that is, $x_0(n-n_0-1)=x_1(n-n_1-1)$ for all large $n$.
    Writing $x_0=t_0^\smallfrown x$ and $x_1=t_1^\smallfrown x$ with $|t_0|=n_1-n_0+|t_1|$ (assuming $n_0\leq n_1$), we obtain the stated condition.
    Conversely, if $x_i=t_i^\smallfrown x$ with $|t_0|-|t_1|=n_1-n_0$, then for all large $n$, $\pi_n(n_0,k_0,x_0)=(p_{k_0})^\smallfrown t_0^\smallfrown x\restriction r$ and $\pi_n(n_1,k_1,x_1)=(p_{k_1})^\smallfrown t_1^\smallfrown x\restriction r$ for some $r$, so they share a common tail and lie in the same connected component of $L^c_n$, hence of $\mathbb L_c$.

    For (b), every vertex of $L^c_n$ has degree at most $2$, so the same holds for $\mathbb L_c$.
    It follows from acyclicity of the finite paths $L^c_n$ that $\mathbb L_c$ is acyclic.
    The vertex $(0,0,0^\omega)$ maps under $\pi_n$ to $e_0^n=(p_0)^\smallfrown 0^n$, which is an endpoint of $L^c_n$, hence has degree $1$ in $L^c_n$ for all $n\geq1$.
    Thus $(0,0,0^\omega)$ has degree $1$ in $\mathbb L_c$.
    For any other vertex $(m,k,x)\in X_c$, the image $\pi_n(m,k,x)$ is an interior vertex of $L^c_n$ for all sufficiently large $n$, so it has degree exactly $2$.

    (c) That $\chi_B(\mathbb L_c)\leq3$ follows from Proposition~\ref{prop:brooks} and part (b).
    I argue the other inequality by contradiction.
    Suppose that there is a Borel partition of $X_c$ into two independent sets $X_c=I_0\cup I_1$.
    Since $X_c$ is Polish, the Baire category theorem guarantees that some $I_\varepsilon$ is non-meager, hence comeager in a basic open set $U=\{(m,k,x)\in X_c:x\restriction r=t\}$ for some $m$, $k$, $r$, and $t\in2^r$.
    In particular, for a comeager set of $x\in2^\mathbb N$ extending $t$, $(m,k,x)\in I_\varepsilon$.
    Since $(m,k,t^\smallfrown(0)^\smallfrown y)$ and $(m,k,t^\smallfrown(1)^\smallfrown y)$ are in the same connected component (by part (a)) for any $y\in2^\mathbb N$, and $I_\varepsilon$ is comeager in $U$, I can find $y$ such that both $(m,k,t^\smallfrown(0)^\smallfrown y)$ and $(m,k,t^\smallfrown(1)^\smallfrown y)$ lie in $I_\varepsilon$.
    But by Lemma~\ref{lemma:Ln_basics}(a), these two vertices are an odd distance apart in $\mathbb L_c$ (as $(p_k)^\smallfrown t^\smallfrown(0)$ and $(p_k)^\smallfrown t^\smallfrown(1)$ are an odd distance apart in every $L^c_n$ for large enough $n$), contradicting $I_\varepsilon$ being independent.
\end{proof}

The main result is then split into two parts.

\begin{theorem}\label{thm:L0-hom-equivalent-main}
    For any analytic graph $G$, either $\chi_B(G)\leq 2$ or there is a $c$ such that $\mathbb L_c\to_cG$.
\end{theorem}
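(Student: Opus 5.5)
Assume $\chi_B(G)>2$. By Theorem~\ref{thm:small}, $V(G)=\Hom(L^c_0,G)$ is large for any $c$, so we start from $\mathcal L_0:=\Hom(L^{c_0}_0,G)$. The plan follows Bernshteyn's argument for $G_0$. We build a sequence $c=\bigcup_n c\restriction n\in\omega^\omega$ and large Borel sets $\mathcal L_n\subseteq\Hom(L^c_n,G)$. Each $\mathcal L_n$ will be covered by finitely many pieces of small diameter, and these pieces must fit together coherently under doubling. We fix a complete compatible metric on $X$ and on $E$, and a countable basis.

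\textbf{Recursive step.} Suppose $\mathcal L_n$ is large Borel, with $c\restriction n$ already fixed. Lemma~\ref{lemma:Ln_basics}(c) gives $d$ extending $c\restriction n$ with $\mathcal L_n^{+d}$ large. We fix $c(n):=d(n)$, and we may take it odd: in the proof of (c), the $d$ from (b) can be taken odd, or else we restrict the union in (c) to odd $d(n)$, which works verbatim. Next we shrink. $\Hom(L^c_{n+1},G)$ is covered by countably many Borel sets on which every coordinate $\varphi(v)$ and $\varphi(e)$ lies in a basic open set of diameter $<2^{-n}$. Since the small sets form a $\sigma$-ideal, one of these pieces intersected with $\mathcal L_n^{+c}$ is still large; call it $\mathcal L_{n+1}$. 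This forces all coordinates of $\mathcal L_{n+1}$ to vary within diameter $2^{-n}$. The key compatibility is that for $t\in 2^{n+1-m}$ the restriction maps $\varphi\mapsto\varphi^{t}$, obtained by iterating $\varphi\mapsto\varphi^i$, send $\mathcal L_{n+1}$ into $\mathcal L_m$. This holds inductively because $\mathcal L_{n+1}\subseteq\mathcal L_n^{+c}$.

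\textbf{Defining the map.} A vertex $(m,k,x)\in X_c$ is realised in $L^c_n$ as $\pi_n(m,k,x)=(p_k)^\frown x\restriction(n-m)$. For $n\ge m$, pick any $\varphi\in\mathcal L_n$ and consider $\varphi(\pi_n(m,k,x))$. Going from $n$ to $n+1$, $\mathcal L_{n+1}(\pi_{n+1}(m,k,x))\subseteq\mathcal L_n(\pi_n(m,k,x))$, because $\pi_{n+1}(m,k,x)=\pi_n(m,k,x)^\frown(x(n-m))$ and $\varphi^{i}\in\mathcal L_n$. So these are decreasing nonempty analytic sets of diameter $\to0$. The same holds for their closures, and completeness gives a unique limit point $f(m,k,x)$. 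The value at stage $n$ depends only on $x\restriction(n-m)$ up to error $2^{-n}$, so $f$ is continuous. Edges are handled the same way: if $(m_i,k_i,x_i)$ are adjacent in $\mathbb L_c$, the corresponding edge sets $\mathcal L_n(\pi_n(\cdot)\pi_n(\cdot))\subseteq E$ shrink to a point $\epsilon\in E$. By continuity of $\pi$, $\pi(\epsilon)$ is the limit of pairs $(\varphi(u),\varphi(v))$, namely $(f(m_0,k_0,x_0),f(m_1,k_1,x_1))$. So this pair lies in $G$, since it is the image of $\epsilon$ and not merely in the closure of $G$.

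\textbf{Main obstacle.} The delicate point is the bookkeeping that the nested sets really are nested for every vertex and edge, including non-path vertices at deep copies. This needs $\mathcal L_{n+1}\subseteq\mathcal L_n^{+c}$ together with the shrinking of \emph{all} coordinates at once, which is why we include the $E$-coordinates in the metric refinement. The use of $E$ is what turns closure-limits into genuine edges, exactly as the definition of $\Hom(H,G)$ via $\pi$ intends. A secondary check is that $c$ being odd is compatible with Lemma~\ref{lemma:Ln_basics}(c), so that we can additionally ensure $\chi_B(\mathbb L_c)=3$ via Lemma~\ref{lemma:L0_basics}(c).
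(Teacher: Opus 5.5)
Your proposal is correct and follows essentially the same route as the paper: apply Lemma~\ref{lemma:Ln_basics}(c) to fix $c(n)$, use the $\sigma$-ideal to pass to a large Borel piece of $\mathcal L_n^{+c}$ whose vertex and edge coordinates all have small diameter, and define $f$ (and the witnessing edges in $E$) as the unique points of the nested closures. Your remark that $c(n)$ can be taken odd by running the proof of (c) with the union restricted to odd $d(n)$ correctly fills in a detail the paper leaves implicit.
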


Moreover, $c$ can be chosen to be unbounded and take only odd values.

\begin{theorem}\label{thm:L0-hom-equivalent}
    Let $c,d\in\omega^\omega$ be unbounded and only taking odd values, then $\mathbb L_c$ and $\mathbb L_d$ are continuously homomorphically equivalent.
\end{theorem}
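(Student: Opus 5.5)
By symmetry it suffices to build a continuous homomorphism $\mathbb L_c\to_c\mathbb L_d$. I would do this by a coherent sequence of finite homomorphisms rather than through Theorem~\ref{thm:L0-hom-equivalent-main}, since that theorem only produces \emph{some} parameter and not the prescribed $c$.

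\textbf{Step 1: finite maps.} I would build a strictly increasing sequence $\sigma(0)<\sigma(1)<\cdots$, together with strings $s_n\in2^{\sigma(n+1)-\sigma(n)-1}$ and homomorphisms $f_n\colon L^c_n\to L^d_{\sigma(n)}$. They should satisfy
\[
f_{n+1}\bigl(u^\frown(i)\bigr)=f_n(u)^\frown s_n{}^\frown(i)\quad\text{for all } u\in V(L^c_n),\ i<2.
\]
That is, copy $i$ of $L^c_n$ goes into the copy of $L^d_{\sigma(n)}$ indexed by $s_n{}^\frown(i)$, which sits inside $L^d_{\sigma(n+1)}$. The joining path of length $c(n)+2$ must then be sent to a walk in $L^d_{\sigma(n+1)}$ between $f_n(e^n_1)^\frown s_n{}^\frown(0)$ and $f_n(e^n_1)^\frown s_n{}^\frown(1)$. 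Since $L^d_{\sigma(n+1)}$ is a path, such a walk of length exactly $c(n)+2$ exists iff the distance $D_n$ between these two vertices satisfies $D_n\le c(n)+2$ and $D_n\equiv c(n)+2\pmod 2$. Parity is automatic by Lemma~\ref{lemma:Ln_basics}(a), because both $c$ and $d$ are odd-valued. So the only real constraint is the length bound $D_n\le c(n)+2$.

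\textbf{Step 2: choosing $\sigma$, $s_n$ and $f_n(e^n_1)$.} I expect this to be the main obstacle: if $c(n)$ is small while every later $d(j)$ is large, then $D_n$ is too large for the obvious choice. The freedom I would exploit is that $f_n$ need not send endpoints to endpoints. $D_n$ equals twice the distance from $f_n(e^n_1)^\frown s_n$ to the level-$(\sigma(n+1)-1)$ junction, plus $d(\sigma(n+1)-1)+2$.

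So I would choose the recursion so that the image of the current gluing vertex always sits as close as possible to a junction. I would also try to use the unboundedness of $c$ by postponing: glue several consecutive small levels of $c$ into one level of $d$ whose join is short (for instance $d(0)$, or any minimum value of $d$), folding the short odd paths back and forth. Alternatively, the coordinate $k$ in $X_d$ can be exploited, since it lets a vertex sit on a long join of $d$. The unboundedness of $d$ is used symmetrically when constructing the reverse map $\mathbb L_d\to\mathbb L_c$. If direct placement fails, my fallback is to factor through an intermediate parameter $e$ with $e\le c$ and $e\le d$ termwise along suitable subsequences, which the freedom in Lemma~\ref{lemma:Ln_basics}(b) suggests is natural.

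\textbf{Step 3: passing to the limit.} Given coherent $f_n$, I would define $f(m,k,x)$ as the point of $X_d$ whose projections $\pi_{\sigma(n)}$ are $f_n(\pi_n(m,k,x))$ for all large $n$. The coherence condition makes the binary coordinate of the image $x$ with the strings $s_n$ interleaved; this is a continuous map $2^\omega\to2^\omega$ on each clopen piece $\{m\}\times\{k\}\times2^\omega$. Each $f_n$ is a homomorphism and adjacency in $\mathbb L_c$ is decided at finite levels, so $f$ is a homomorphism. Continuity holds because finitely many bits of $x$ determine finitely many bits of the image.
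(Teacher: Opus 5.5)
\textbf{The gap is Step~2.} It carries the whole content of the theorem, and you leave it as a list of possible tricks. In fact the situation you single out there (a small $c(n)$ against large later values of $d$) is a genuine obstruction, not a technicality. As literally stated, the theorem is false.

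Let $f\colon X_c\to X_d$ be any continuous homomorphism and $K=\{(0,0,x):x\in2^\omega\}$. For $n\ge1$ and $y\in2^\omega$, the points $a_i=(0,0,0^{n-1}1\,i\,y)$, $i<2$, project under $\pi_{n+1}$ to $e^n_1{}^\frown(i)$. So they are joined in $\mathbb L_c$, through the vertices $(n+1,j,y)$ with $j\le c(n)$, by a path of odd length $c(n)+2$, although their binary coordinates agree on the first $n$ bits. As $f(K)$ is compact, it meets only finitely many slices $\{(m,k)\}\times2^\omega$. So by uniform continuity, for every $r$ there is $n_r$ such that for $n\ge n_r$ we get $f(a_i)=(m,k,z_i)$ with the same $(m,k)$ and $z_0\restriction r=z_1\restriction r$.

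Since $f$ maps that path to an odd walk and $\mathbb L_d$ is acyclic, hence bipartite, $f(a_0)\neq f(a_1)$ and their distance is at most $c(n)+2$. But $z_0\ne z_1$ agree below $r$, so the path between them must cross a join added at some stage $m+q$ with $q\ge r$, and that join has $d(m+q)+2$ edges. Hence $c(n)\ge\min_{j\ge r}d(j)$ for all $n\ge n_r$, so $\liminf_n c(n)\ge\liminf_j d(j)$ is necessary. For $c(2n)=1$, $c(2n+1)=2n+1$ and $d(n)=2n+1$ (odd-valued and unbounded) there is no continuous homomorphism from $\mathbb L_c$ to $\mathbb L_d$ at all. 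No choice in Step~2 can succeed; the hypothesis must be strengthened, e.g.\ to $c,d\to\infty$ as for the paper's $c_0$.

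\textbf{Even then, your Step~1 ansatz is too rigid.} By your own formula, $D_n\ge d(\sigma(n+1)-1)+2$, because the two gluing images lie in different halves of $L^d_{\sigma(n+1)}$. Placing $f_n(e^n_1)$ near a junction or on a join (the $k$-coordinate) only affects the other summand. Since $\sigma$ is strictly increasing, $\sigma(n+1)-1\ge n-C$ for some constant $C$, even if you start the recursion late. So for $c(n)=2n+1$ and $d(n)=4n+1$ you would need $2n+1\ge 4(n-C)+1$ for all large $n$, which fails.

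The remedies you list do not fit the ansatz. Absorbing several levels of $c$ into one level of $d$ is excluded by the coherence condition $f_{n+1}(u^\frown(i))=f_n(u)^\frown s_n{}^\frown(i)$, which gives each level of $c$ its own top-level join of $d$. An intermediate $e\le c,d$ at best yields $\mathbb L_c\to_c\mathbb L_e$ and $\mathbb L_d\to_c\mathbb L_e$, not the hard direction $\mathbb L_e\to_c\mathbb L_d$.

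Yet the estimate above shows that absorption is unavoidable in this example. The join of $\mathbb L_d$ separating $f(a_0)$ from $f(a_1)$ comes from a stage $j$ with $4j+1\le 2n+1$. By pigeonhole, one stage of $d$ must therefore serve several stages of $c$. Constructing maps of that kind is exactly the combinatorial work the paper does not redo but cites from Carroy, Miller, Schrittesser and Vidny\'anszky (Proposition~4.2 and Claim~4.1 there). Your proposal does not supply a substitute for it.
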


The proof of Theorem~\ref{thm:L0-hom-equivalent} is purely combinatorial: one constructs homomorphisms between $\mathbb L_c$ and $\mathbb L_d$ by iteratively extending partial maps along finite path components, using the ``large gap property'' guaranteed by unboundedness.
The argument does not depend on the method used to establish Theorem~\ref{thm:L0-hom-equivalent-main}; see \cite[Proposition~4.2 and Claim~4.1]{L0paper} for details.

Combining Theorems~\ref{thm:L0-hom-equivalent-main} and \ref{thm:L0-hom-equivalent}, and fixing any unbounded odd $c_0$ (e.g., $c_0(0)=1$ and $c_0(n)=2n-1$ for $n>0$), we recover the $L_0$ dichotomy of \cite{L0paper}:

\begin{corollary}
    Let $\mathbb L_0:=\mathbb L_{c_0}$.
    For any analytic graph $G$ on a Polish space, exactly one of the following holds:
    \begin{enumerate}
        \item $\chi_B(G)\leq 2$;
        \item $\mathbb L_0\to_cG$.
    \end{enumerate}
\end{corollary}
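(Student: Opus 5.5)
The plan is to derive the corollary directly from Theorems~\ref{thm:L0-hom-equivalent-main} and~\ref{thm:L0-hom-equivalent}, together with Fact~\ref{fact:chrom_transitive} and Lemma~\ref{lemma:L0_basics}(c). There are two halves: the two alternatives cannot hold simultaneously, and at least one of them always holds.

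\emph{At most one alternative holds.} First I check that $c_0$ takes only odd values: $c_0(0)=1$ and $c_0(n)=2n-1$ for $n>0$ are all odd. So Lemma~\ref{lemma:L0_basics}(c) gives $\chi_B(\mathbb L_0)=3$. If $\mathbb L_0\to_c G$, then Fact~\ref{fact:chrom_transitive} yields $3=\chi_B(\mathbb L_0)\leq\chi_B(G)$, so $\chi_B(G)\leq 2$ fails.

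\emph{At least one alternative holds.} Suppose $\chi_B(G)>2$. By Theorem~\ref{thm:L0-hom-equivalent-main}, together with the accompanying remark, there is a $c\in\omega^\omega$ that is unbounded, takes only odd values, and satisfies $\mathbb L_c\to_c G$ via some continuous homomorphism $f$. Since $c_0$ is also unbounded and odd-valued, Theorem~\ref{thm:L0-hom-equivalent} provides a continuous homomorphism $g\colon\mathbb L_{c_0}\to\mathbb L_c$. The composite $f\circ g$ is continuous, and it maps edges to edges because each factor does. Hence $\mathbb L_0\to_c G$.

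\emph{Main obstacle.} The only delicate point is the ``moreover'' clause, namely that the $c$ produced for Theorem~\ref{thm:L0-hom-equivalent-main} can be taken unbounded and odd-valued. Without it, Theorem~\ref{thm:L0-hom-equivalent} does not apply. Lemma~\ref{lemma:Ln_basics}(c) as stated only yields some $d$ preserving largeness, with no control on the parity or size of $d(n)$. I would handle this inside the proof of Theorem~\ref{thm:L0-hom-equivalent-main} by strengthening (c) along the lines of (b): in the contradiction argument, restrict the union to those $d$ with $d(n)\geq N$ odd. Part (b) already produces such a $d$ with $\mathcal L_-^{+d}\neq\emptyset$, so the same contradiction goes through. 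Choosing $N=n$ at stage $n$ then forces $c$ to be odd-valued and unbounded. Granting this, the corollary follows from the two-line argument above.
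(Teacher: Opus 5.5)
Your argument is correct and follows the paper's route: exclusivity comes from Lemma~\ref{lemma:L0_basics}(c) together with Fact~\ref{fact:chrom_transitive}, and existence comes from composing the homomorphism given by Theorem~\ref{thm:L0-hom-equivalent-main} with a continuous homomorphism $\mathbb L_{c_0}\to\mathbb L_c$ from Theorem~\ref{thm:L0-hom-equivalent}. Your handling of the ``moreover'' clause is sound: you run the contradiction in Lemma~\ref{lemma:Ln_basics}(c) only over those $d$ with $d(n)\geq N$ odd, which part (b) still hits, and this spells out what the paper's one-line remark about using part (b) leaves implicit.
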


\section{Proof of Theorem~\ref{thm:L0-hom-equivalent-main}}

By Lemma~\ref{lemma:L0_basics}(c) and Fact~\ref{fact:chrom_transitive}, it is clear that both conditions cannot hold simultaneously.
Now suppose that $G$ is an analytic graph of Borel chromatic number at least three.
The theorem is proved by constructing a valid $c$ and a continuous homomorphism witnessing $\mathbb L_c\to_cG$.

By Theorem~\ref{thm:small}, $V(G)$ is large.
Fix compatible complete metrics $\rho$ on $V(G)$ and $\delta$ on $E$.
Since $L^c_0=\bullet$, I identify $\Hom(L^c_0,G)$ with $V(G)$ and set $\mathcal L_0:=V(G)$, which is large.
I recursively construct a function $c\in\omega^\omega$ (taking only odd values) and a sequence of large Borel sets $\mathcal L_n\subseteq\Hom(L^c_n, G)$ such that for all $n$,

\begin{enumerate}
    \item $\mathcal L_{n+1}\subseteq\mathcal L_n^{+c}$;
    \item for all $u\in V(L^c_n)$, $\mathrm{diam}_\rho(\overline{\mathcal L_n(u)})< 2^{-n}$; and
    \item for all $(u,v)\in E(L^c_n)$, $\mathrm{diam}_\delta(\overline{\mathcal L_n(u,v)})< 2^{-n}$.
\end{enumerate}

Given $\mathcal L_n$ large, apply Lemma~\ref{lemma:Ln_basics}(c) to obtain $d\in\omega^\omega$ with $d\restriction n=c\restriction n$ and $\mathcal L_n^{+d}$ large; set $c(n):=d(n)$.
To arrange conditions (2) and (3) for $n+1$: since $V(G)$ and $E$ can each be covered by countably many closed sets of $\rho$-diameter (respectively $\delta$-diameter) less than $2^{-(n+1)}$, and small sets form a $\sigma$-ideal, one may intersect with the preimages of these covers and discard the resulting small pieces, passing to a large Borel subset $\mathcal L_{n+1}\subseteq\mathcal L_n^{+c}$ satisfying (2) and (3) at stage $n+1$.
Moreover, by choosing $c(n)$ sufficiently large using part (b) at each stage, $c$ can be made unbounded.

By (1), for all $n\geq m$, $$\mathcal L_{n+1}(\pi_{n+1}(m,k,x))\subseteq\mathcal L_n(\pi_n(m,k,x));$$ indeed, each $\varphi\in\mathcal L_{n+1}$ satisfies $\varphi^{x(n-m)}\in\mathcal L_n$ and $$\varphi^{x(n-m)}(\pi_n(m,k,x))=\varphi(\pi_{n+1}(m,k,x)).$$
By the same reasoning, if $\pi_n(m_0,k_0,x_0)$ and $\pi_n(m_1,k_1,x_1)$ are adjacent in $L^c_n$ for all large $n$, then \begin{align*}
    \mathcal L_{n+1}(\pi_{n+1}(m_0,k_0,x_0),\pi_{n+1}(m_1,k_1,x_1))\\
    \subseteq\mathcal L_n(\pi_n(m_0,k_0,x_0),\pi_n(m_1,k_1,x_1)).
\end{align*}
For $(m,k,x)\in X_c$, let $f(m,k,x)$ be the unique point in $$\bigcap_{n=m}^\infty\overline{\mathcal L_n(\pi_n(m,k,x))}=\bigcap_{n=0}^\infty\overline{\mathcal L_{n+m}((p_k)^\frown x\restriction n)}.$$
The map $f\colon X_c\to X$ is continuous.

It remains to check that it is a homomorphism from $\mathbb L_c$ to $G$.
Suppose that $(m_i,k_i,x_i)_{i<2}$ are adjacent in $\mathbb L_c$, that is, for all $n\geq n_0:=\max\{m_0,m_1\}$, $\pi_n(m_i,k_i,x_i)$ are adjacent in $L_n^c$.
There is a unique edge $e\in E$ inside $$\bigcap_{n\geq n_0}\overline{\mathcal L_n(\pi_n(m_0,k_0,x_0),\pi_n(m_1,k_1,x_1))}.$$
By continuity of $\pi$, $(f(m_0,k_0,x_0),f(m_1,k_1,x_1))=\pi(e)\in G$.

\nocite{*}
\bibliographystyle{alphaurl}
\bibliography{bibliography/references}

\end{document}